\theoremstyle{plain}
\newtheorem{theorem}{Theorem}[section]
\newtheorem{lemma}[theorem]{Lemma}
\theoremstyle{definition}
\numberwithin{equation}{section}
\newcommand{\R}{{\mathbb R}}
\newcommand{\bS}{\mathbb{S}}
\newcommand{\qu}{\quad}
\newcommand{\D}{\nabla}
\newcommand{\fr}{\frac}
\title[Diameter estimate for planar $L_p$ dual Minkowski problem]{Diameter estimate for planar $L_p$ dual Minkowski problem}
\author{Minhyun Kim}
\address{Fakult\"at f\"ur Mathematik, Universit\"at Bielefeld, 33615 Bielefeld, Germany}
\email{minhyun.kim@uni-bielefeld.de}
\author{Taehun Lee}
\address{School of Mathematics, Korea Institute for Advanced Study, Seoul 02455, Korea}
\email{taehun@kias.re.kr}
\subjclass[2020]{52A10, 52A39, 53A04}
\keywords{$L_p$ dual Minkowski problem, diameter estimate}
\begin{document}

\begin{abstract}
In this paper, given a prescribed measure on $\bS^1$ whose density is bounded and positive, we establish a uniform diameter estimate for solutions to the planar $L_p$ dual Minkowski problem when $0<p<1$ and $q\ge 2$. We also prove the uniqueness and positivity of solutions to the $L_p$ Minkowski problem when the density of the measure is sufficiently close to a constant in $C^\alpha$.
\end{abstract}

\maketitle


\section{Introduction} 


The Minkowski problem, posed by Minkowski \cite{Min03}, is one of the central problems in convex geometry. It asks if a given measure on the unit sphere $\mathbb{S}^{n-1}$ arises as the surface area measure of a convex body. Here, a {\it convex body $K$} is a compact convex set of $\mathbb{R}^{n}$ with a nonempty interior, and its surface area measure $S(K, \cdot)$ is determined by the celebrated Aleksandrov variational formula
\begin{equation}
\left. \frac{\mathrm{d}\mathrm{Vol}(K+tL)}{\mathrm{d}t} \right|_{t = 0^{+}} = \int_{\mathbb{S}^{n-1}} h_{L}(u) \,\mathrm{d}S(K, u) \quad \text{for any convex body }L,
\end{equation}
where $\mathrm{Vol}$ denotes the $n$-dimensional volume and $h_L:\mathbb{S}^{n-1}\to \mathbb{R}$ is the {\it support function of $L$} defined by $h_L(u) = \max \lbrace u \cdot x: x \in L \rbrace$.

An important variant of the Minkowski problem is the $L_p$ Minkowski problem initiated by Lutwak \cite{Lut93}. To describe the $L_p$ Minkowski problem, we first recall Firey's \cite{Fir74} $p$-linear combination of convex bodies. Let us denote by $\mathcal{K}^{n}_{o}$ the set of all convex bodies containing the origin (not necessarily in its interior). For $K, L \in \mathcal{K}^{n}_{o}$, $p \geq 1$, and $t, s > 0$, the {\it $p$-linear combination $t \cdot_{p} K +_{p} s \cdot_{p} L$ of $K$ and $L$} is defined as the convex body whose support function is given by $(t h_K^p + s h_L^p)^{1/p}$. Then, there exists a Borel measure $S_p(K,\cdot)$ on $\mathbb{S}^{n-1}$ such that
\begin{equation}\label{eq:S_Kp}
\left. \frac{\mathrm{d}\mathrm{Vol}(K+_{p}t \cdot_{p}L)}{\mathrm{d}t} \right|_{t = 0^{+}} = \frac{1}{p} \int_{\mathbb{S}^{n-1}} h_{L}^p(u) \,\mathrm{d}S_p(K, u)
\end{equation}
for all convex bodies $L$. We call this measure $S_p(K,\cdot)$ the {\it $L_p$ surface area measure}, and it is known \cite{Lut93} that $S_p(K,\cdot)$ is related to $S(K,\cdot)$ by
\begin{equation} \label{eq:Sp-density}
S_p(K,\cdot) = h_K^{1-p} S(K,\cdot) \qu \text{on}\qu \bS^{n-1}.
\end{equation}
In addition, the total measure
\begin{equation}
S_p(K,\bS^{n-1})=\int_{\bS^{n-1}} \,\mathrm{d}S_p(K,u)=\int_{\bS^{n-1}} h_K^{1-p}(u) \,\mathrm{d}S(K, u)
\end{equation}
is called the \textit{$L_p$ surface area of $K$}.
Note that the $L_p$ surface area measure can be defined for all $p \in \mathbb{R}$ through the relation \eqref{eq:Sp-density}. 
The associated Minkowski type problem is then defined for all $p\in \R$ and is called the $L_p$ Minkowski problem. Precisely, it asks if a given measure $\mu$ on $\mathbb{S}^{n-1}$ arises as the $L_p$ surface area measure $S_p(K,\cdot)$ of a convex body $K \in \mathcal{K}^{n}_{o}$. This problem includes the classical Minkowski problem ($p=1$), the logarithmic Minkowski problem  ($p=0$), and the centro-affine Minkowski problem ($p=-n$) as important cases. 

On the other hand, an important variant of the logarithmic Minkowski problem with a parameter $q$ is recently proposed in \cite{HLYZ16} which incorporates the Aleksandrov problem ($q=0$) as well as the logarithmic Minkowski problem itself ($q=n$). Therefore, in view of the logarithmic Minkowski problem, there are two parameters $p$ and $q$ which produce two corresponding Brunn--Minkowski type theory, namely, the $L_p$ Brunn--Minkowski theory and the dual Brunn--Minkowski theory, respectively. Furthermore, in the very recent work \cite{LYZ18}, Lutwak, Yang, and Zhang unify the $L_p$ Minkowski problem and the dual Minkowski problem by considering two parameters $p$ and $q$ at the same time. In particular, they introduce the $L_p$ dual curvature measure $\widetilde C_{p,q}(K,\cdot)$ on $\bS^n$. The associated Minkowski problem is called the $L_p$ dual Minkowski problem, and this covers all Minkowski type problems mentioned so far. Since then, the $L_p$ dual Minkowski problem has been intensively studied by many authors, e.g., \cite{BF19,CHZ19,CCL21,CL21,HZ18_AM,LLL22}.

To describe the $L_p$ dual curvature measure $\widetilde{C}_{p,q}(K,\cdot)$, we recall the {\it $q$-th dual volume of $K$}:
\begin{align}
\widetilde {\mathrm{Vol}}_q(K)= \fr{1}{n}\int_{\bS^{n-1}}\rho^q(\xi) \,\mathrm{d}\xi,
\end{align}
where $\rho$ is the {\it radial function of $K$} defined by $\rho(\xi)=\max\{\lambda:\lambda \xi\in K\}$. For $q\not=0$, the {\it $q$-th dual curvature measure $\widetilde{C}_q(K,\cdot)$ of a convex body $K$} is determined by the following variational formula
\begin{equation}\label{eq:C_Kq}
\left. \frac{\mathrm{d}\widetilde {\mathrm{Vol}}_q(K+tL)}{\mathrm{d}t} \right|_{t = 0^{+}} = q\int_{\mathbb{S}^{n-1}} h_{L}h_{K}^{-1} \,\mathrm{d}\widetilde C_q(K,\cdot) \quad \text{for any }L\in \mathcal{K}_o^n.
\end{equation}
As in \eqref{eq:S_Kp}, we involve the $p$-linear combination into \eqref{eq:C_Kq} to produce the $L_p$ dual curvature measure $\widetilde C_{p,q}(K,\cdot)$, and it turns out that $\widetilde C_{p,q}(K,\cdot)$ is related to $\widetilde C_{q}(K,\cdot)$ by
\begin{align}
\widetilde C_{p,q}(K,\cdot)=h_K^{-p}\widetilde C_{q}(K,\cdot).
\end{align}

In the smooth category, the $L_p$ dual Minkowski problem becomes solving the Monge--Amp\`ere type equation
\begin{equation} \label{eq:MP}
h_K^{1-p}(h_K^2+|\D h_K|^2)^{\fr{q-n}{2}}\det(\nabla^2 h_K + h_K I) =  f \qu \text{on} \qu \mathbb{S}^{n-1},
\end{equation}
where $\nabla$ is the covariant derivative with respect to an orthonormal frame on $\mathbb{S}^{n-1}$ and $I$ is the identity matrix.
In this paper, we consider uniform diameter estimates of solutions to \eqref{eq:MP}.
This type of estimate is important in view of compactness results as in other geometric problems such as the Yamabe problem \cite{KMS09}. 

The diameter estimate in the case $p>q$ follows from a maximum principle argument. Indeed, if $h_K$ attains its maximum at a point $x_0\in\bS^n$, then the equation \eqref{eq:MP} implies that
\begin{align}\label{ineq:hmax}
\max_{\bS^{n-1}}h_K=h_K(x_0) \le  (\min_{\bS^{n-1}}f)^{\fr{1}{q-p}}.
\end{align}

However, the situation for $p\le q$ is complicated even in the $L_p$ Minkowski problem, i.e., $q=n$.
In fact, there are no diameter estimates for \eqref{eq:MP} when $-n<p<0$ and $q=n$ as shown in \cite{JLW15}, where Jian, Lu, and Wang construct a sequence of bounded positive functions $f_k$ such that diameters of the corresponding solutions are unbounded.
Note that the affine critical case $p=-n$ and $q=n$ also does not have diameter estimates since it is invariant under affine transformations. Much less is known about the super-critical case $p<-n$ and $q=n$ in which even the existence problem in the smooth category is resolved very recently \cite{GLW22}.

On the other hand, when $p>1$ and $q=n$, Chou and Wang \cite{CW06} showed the diameter estimates for solutions to \eqref{eq:MP}. 
In that paper, the case $1<p\le n$ is obtained by an approximation argument, and the case $p>n$ uses a simple maximum argument as in \eqref{ineq:hmax}. 
For the logarithmic case $p=0$ and $q=n$, there are recent results on the lower dimensional case. Namely, when $n=2$ or $3$, diameter estimates are provided by Chen and Li \cite{CL18} for $n=2$; by Chen, Feng, and Liu \cite{CFL22} for $n=3$. In particular, the paper \cite{CL18} concerns the diameter estimates in a more general framework of the dual Minkowski problem with $q>0=p$. Finally, the case $0<p<1$ and $q=2$ in planar $L_p$ Minkowski problem is treated in \cite{Du21}.

The question of diameter estimate remains open widely for general $(p,q)$ with $p<q$.
In this paper, we prove the diameter estimate for solutions to the planar $L_p$ dual Minkowski problem for $p \in (0,1)$ and $q\ge 2$.

\begin{theorem} \label{thm:est}
Let $p \in (0,1)$, $q \geq 2$, $\Lambda \geq 1$, $f \in L^{\infty}(\mathbb{S}^{1})$, and suppose that $1/\Lambda \leq f \leq \Lambda$. If a convex body $K \in \mathcal{K}^{1}_{o}$ solves \eqref{eq:MP}, then $\|h_K\|_{L^{\infty}(\mathbb{S}^{1})} \leq C$ for some $C = C(p, q, \Lambda) > 0$.
\end{theorem}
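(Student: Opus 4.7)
The plan is to exhibit a first-integral–style identity for the planar ODE \eqref{eq:MP} and then apply a maximum-principle argument at the local maxima of $h := h_K$ to obtain a pointwise bound on $R := \|h\|_{L^\infty(\mathbb{S}^1)}$. Writing $\rho := (h^2 + (h')^2)^{1/2}$, equation \eqref{eq:MP} for $n=2$ reads
\[
h'' + h \;=\; f\,h^{p-1}\rho^{\,2-q} \qquad \text{on } \mathbb{S}^1,
\]
and the hypothesis $f \ge 1/\Lambda$ together with the factor $h^{1-p}$ forces $h>0$ everywhere (otherwise the left-hand side vanishes while the right-hand side is bounded away from zero). Multiplying by $2h'$, the left side becomes $((h')^2+h^2)'=(\rho^2)'$; using $(\rho^q)'=(q/2)\rho^{q-2}(\rho^2)'$ and $(h^p)'=p\,h^{p-1}h'$, one obtains the key identity
\[
(\rho^q)' \;=\; \frac{q}{p}\, f\,(h^p)'.
\]

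Next introduce the auxiliary function
\[
\Psi(\theta) \;:=\; \rho^q(\theta) - \frac{q\Lambda}{p}\, h^p(\theta),
\]
so that $\Psi' = (q/p)(f-\Lambda)(h^p)' = q(f-\Lambda)h^{p-1}h'$. Because $f\le\Lambda$ and $h>0$, the sign of $\Psi'$ is opposite to that of $h'$; in particular $\Psi$ is non-increasing on every arc where $h$ is non-decreasing. At any critical point of $h$ we have $h'=0$, hence $\rho=h$, and $\Psi$ coincides there with $\psi(h)$, where $\psi(H):=H^q - (q\Lambda/p)H^p$.

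Now apply this to the global maximum. Assume $h$ is non-constant (the constant case reduces immediately to $R^{q-p}=f\le\Lambda$); let $R=h(\theta^*)$ be the global maximum, and let $\theta_*$ be an adjacent local minimum of $h$ (i.e., the directly preceding or following critical point in the critical-point sequence) with value $m\in(0,R)$. Then $h$ is monotone on the sub-arc from $\theta_*$ to $\theta^*$, and the monotonicity of $\Psi$ there yields $\psi(R)\le\psi(m)$, i.e.,
\[
R^q - m^q \;\le\; \frac{q\Lambda}{p}\,(R^p - m^p).
\]
Setting $t := m/R\in[0,1)$ and dividing by $R^q$ gives $R^{q-p}(1-t^q)\le(q\Lambda/p)(1-t^p)$. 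For $0<p<q$ and $t\in[0,1)$ the elementary inequality $1-t^q\ge 1-t^p$ (equivalent to $t^{q-p}\le1$) yields $(1-t^p)/(1-t^q)\le1$, so
\[
R^{q-p} \;\le\; \frac{q\Lambda}{p}, \qquad \text{i.e.,} \qquad R \;\le\; \Bigl(\tfrac{q\Lambda}{p}\Bigr)^{1/(q-p)} \;=:\; C(p,q,\Lambda).
\]

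The main subtlety I expect to confront is justifying the existence of the adjacent local minimum and handling possible plateaus or degenerate critical points. For strictly convex $C^2$ solutions the critical points of $h$ on $\mathbb{S}^1$ are isolated and alternate between maxima and minima, so every local max has adjacent local mins on both sides. If $h$ is locally constant on a plateau around its max, the ODE on that plateau forces $R^{q-p}=f\le\Lambda$, giving the bound directly. More generally, the classical regularity forced by $f\in L^\infty$ bounded away from zero (so that $h>0$ and $h''+h>0$) is enough to carry out the monotonicity argument on some monotone arc from an interior minimum up to $\theta^*$; an approximation by strictly convex smooth solutions can be used to dispose of any remaining degeneracy.
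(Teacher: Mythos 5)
Your approach is genuinely different from the paper's and, if the positivity issue is handled correctly, substantially simpler. The paper estimates $\widetilde{C}_{p,q}(K,\mathbb{S}^1)$ in terms of the John ellipse's principal radii (Lemma~\ref{lem:Lp-ellipse}) and then runs a two-case contradiction argument on a degenerating sequence. You instead exploit the one-dimensional structure: the identity $(\rho^q)'=\tfrac{q}{p}f(h^p)'$ is correct and gives an explicit bound $R\le(q\Lambda/p)^{1/(q-p)}$ directly. The algebra closing the argument, namely
\[
R^{q-p}(1-t^q)\le\tfrac{q\Lambda}{p}(1-t^p),\qquad 1-t^q\ge1-t^p \text{ for } t\in[0,1],\ q>p,
\]
is sound.

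However, there is a concrete error: the claim that \emph{``the hypothesis $f\ge 1/\Lambda$ together with the factor $h^{1-p}$ forces $h>0$ everywhere''} is false, and the paper explicitly says so (see the discussion preceding \Cref{thm:MP}: for $n=2$ and $0<p<2$, solutions can vanish even for positive smooth $f$, with references to \cite{BT17,BBC20}). Your justification --- that the left-hand side of \eqref{eq:MP} vanishes while the right-hand side stays bounded away from zero --- ignores that $(h''+h)$ can blow up at a zero of $h$. In fact the balance $h\sim c\,s^{\alpha}$ with $\alpha=q/(q-p)\in(1,2)$ near a zero makes $h^{1-p}\rho^{q-2}(h''+h)$ bounded and positive, so zeros do occur. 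Fortunately the error does not damage the conclusion: if the adjacent local minimum $\theta_*$ is a zero of $h$, then $t=0$ and you obtain $R^{q-p}\le q\Lambda/p$ directly. But you should (i) delete the false positivity claim, and (ii) justify that $\Psi=\rho^q-\tfrac{q\Lambda}{p}h^p$ is absolutely continuous through a possible zero of $h$ --- the computation $h^{p-1}h'\sim s^{\alpha p-1}$ shows $\Psi'$ is integrable there and $\Psi\to 0$ at the zero, so the fundamental theorem of calculus still applies. Finally, the remark that critical points of $C^2$ functions ``are isolated and alternate'' is not quite right in general; it is cleaner to take $\theta_*=\sup\{\theta<\theta^*:h'(\theta)<0\}$, observe that $h'\ge 0$ on $(\theta_*,\theta^*]$ and $h'(\theta_*)=0$, and note that if no such $\theta_*$ exists then $h$ is constant.
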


We stress that a crucial ingredient in \Cref{thm:est} is establishing estimates on the total measure of $\widetilde C_{p,q}(K,\cdot)$ in terms of its volume and surface area (\Cref{lem:Lp-ellipse}), which also provides a new proof in the $L_p$ case with $0<p<1$. Once we have the estimates, we will argue by contradiction assuming that there exists a sequence of convex bodies such that eccentricity of their John ellipse converges to one, i.e., the ratio of the major axis over the minor axis goes to infinity. 
As indicated in the equation \eqref{eq:MP}, the location of the origin is important since the equation becomes degenerate at points where $h_K$ is zero. Hence we divide the problem into two cases: (1) the origin lies near a tip; (2) the origin lies far from the tip. See details in \Cref{sec:upper-bound}.

It would be interesting to prove the diameter estimate for higher dimensional cases $n \geq 3$. We also note that positive lower bounds for solutions to \eqref{eq:MP} are not available in general even when $f$ is positive and smooth since there exists a counterexample, see the discussion for positivity of solutions below.

\vspace{0.5cm}

The diameter estimate can be applied to obtain the uniqueness and positivity of solutions to the $L_p$ Minkowski problem ($q=2$) when the density of the measure is sufficiently close to a constant in $C^\alpha$. 

\begin{theorem} \label{thm:MP}
Let $p \in (0,1)$, $q=2$, and $f \in C^{\alpha}(\mathbb{S}^{1})$. Then, there exists a small constant $\varepsilon_0 \in (0,1)$ that only depends on $p$, such that if $\|f-1\|_{C^{\alpha}(\mathbb{S}^{1})} \leq \varepsilon_0$, the equation \eqref{eq:MP} has a unique solution. Moreover, the solution is positive and of $C^{2,\alpha}(\mathbb{S}^{1})$.
\end{theorem}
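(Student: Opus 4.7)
The plan is to combine the diameter estimate of \Cref{thm:est} with an implicit function theorem (IFT) argument around the trivial solution $h\equiv 1$, which corresponds to $f\equiv 1$. With $n=q=2$, equation \eqref{eq:MP} reduces to $F(h):=h^{1-p}(h''+h)=f$ on $\bS^1$, and $F$ maps the open cone of positive functions in $C^{2,\alpha}(\bS^1)$ into $C^\alpha(\bS^1)$. A direct computation gives $F(1)=1$ and $F'(1)\eta=\eta''+(2-p)\eta$. Since the Fourier spectrum $\{(2-p)-k^2:k\in\mathbb Z_{\geq 0}\}$ avoids zero when $p\in(0,1)$ (because $2-p\in(1,2)$ is not the square of a nonnegative integer), the linearization is an isomorphism, and the IFT supplies $\varepsilon_1,\delta_1>0$ such that for every $f$ with $\|f-1\|_{C^\alpha}\leq\varepsilon_1$ there is a unique $h\in C^{2,\alpha}(\bS^1)$ with $\|h-1\|_{C^{2,\alpha}}\leq\delta_1$ satisfying $F(h)=f$; positivity and $C^{2,\alpha}$-regularity of this $h$ are then automatic.

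To upgrade this to global uniqueness, I would show that, for $\varepsilon_0\in(0,\varepsilon_1]$ small enough, every solution lies in the $\delta_1$-ball around $h=1$. I argue by contradiction: otherwise there exist $f_k\to 1$ in $C^\alpha(\bS^1)$ and solutions $K_k$ with $\|h_{K_k}-1\|_{C^{2,\alpha}}\geq\delta_1$. \Cref{thm:est} gives $\|h_{K_k}\|_{L^\infty}\leq C$ uniformly, so Blaschke's selection theorem produces a subsequence with $K_k\to K_\infty$ in Hausdorff distance and $h_{K_k}\to h_\infty$ uniformly on $\bS^1$. Provided $h_\infty>0$ everywhere (so that the equation is uniformly elliptic along the sequence), Schauder estimates promote this to $C^{2,\beta}$-convergence for every $\beta<\alpha$, and $h_\infty\in C^{2,\alpha}$ solves the limit equation $F(h_\infty)=1$.

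It remains to identify $h_\infty\equiv 1$ by ruling out nonconstant positive $2\pi$-periodic solutions of $h''+h=h^{p-1}$, which I would do by phase-plane analysis of the conservation law $\frac{(h')^2}{2}+\frac{h^2}{2}-\frac{h^p}{p}=E$ (or by invoking the corresponding planar uniqueness result for the $L_p$ Minkowski problem with constant data). The $C^{2,\alpha}$ convergence then contradicts $\|h_{K_k}-1\|_{C^{2,\alpha}}\geq\delta_1$ and closes the loop; global uniqueness, positivity, and the $C^{2,\alpha}$-regularity of the solution all follow from the output of the IFT in the first paragraph.

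The main obstacle is the uniform positive lower bound $h_{K_k}\geq c>0$ needed to exclude degeneration of the sequence $K_k$. The introduction explicitly notes that such a bound fails for \eqref{eq:MP} in general, so the closeness $\|f_k-1\|_{C^\alpha}\to 0$ must be used essentially. My plan is to combine the total-mass estimate \Cref{lem:Lp-ellipse} (which forces $\int_{\bS^1}f_k\,\d\theta$ to stay near $2\pi$) with a quantitative form of the near-tip/far-from-tip dichotomy from the proof of \Cref{thm:est}: any degeneration of $K_k$ would concentrate the $L_p$ surface area measure near a pair of antipodal directions in a way that is incompatible with the uniform $C^\alpha$-closeness of $f_k$ to $1$. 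Turning this heuristic into a quantitative non-degeneration statement is the heart of the matter.
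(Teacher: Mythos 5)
Your overall strategy is the same as the paper's: an implicit-function-theorem argument near $h\equiv 1$ (your spectral computation $F'(1)\eta = \eta'' + (2-p)\eta$ is exactly the paper's observation that $2-p$ is not an eigenvalue of $\Delta_{\mathbb{S}^1}$), combined with a compactness argument driven by \Cref{thm:est} and Blaschke's selection theorem to force every solution into the IFT-ball. The gap you correctly flag, however, is an artefact of the norm you chose for the contradiction. You posit $\|h_{K_k}-1\|_{C^{2,\alpha}}\geq \delta_1$ and then try to pass the sequence to a limit in $C^{2,\beta}$; that requires interior Schauder estimates along the sequence, hence a uniform lower bound $h_{K_k}\geq c>0$, which is precisely what is not available a priori.

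The paper sidesteps this entirely by running the compactness argument in $L^\infty$ rather than $C^{2,\alpha}$, and isolating the regularity upgrade in a separate local lemma (\Cref{lem:uniqueness}). Concretely: \Cref{lem:uniqueness} shows that any two solutions with $\|h_{K_j}-1\|_{L^\infty}\leq \varepsilon_0$ coincide, by first noting that $L^\infty$-closeness to $1$ already gives $h_{K_j}\geq 1/2$ (so Caffarelli's $W^{2,q}$ regularity and then Schauder apply \emph{within} this regime, not along a possibly-degenerating sequence), promoting $L^\infty$-closeness to $C^{2,\alpha}$-closeness, and then invoking the IFT. The proof of \Cref{thm:MP} then only needs to show that every solution satisfies $\|h_{K}-1\|_{L^\infty}\leq\varepsilon_0$: the contradiction argument uses the diameter bound and Blaschke to get Hausdorff (hence uniform $L^\infty$) convergence $h_{K_i}\to h_{K_\infty}$, with $K_\infty$ solving the limit equation with $f\equiv 1$ in the weak (measure) sense — which requires no ellipticity or lower bound at all — and then appeals to uniqueness for $f\equiv 1$ to conclude $h_{K_\infty}\equiv 1$. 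So the "quantitative non-degeneration" statement you were aiming for is not needed; the fix is simply to reformulate your step (2) as "$\|h_{K_k}-1\|_{L^\infty}\geq\varepsilon_0$" and to insert the $L^\infty\Rightarrow C^{2,\alpha}$ upgrade as a separate local step, exactly as in \Cref{lem:uniqueness}. One further small gap in your write-up: even granting $C^{2,\beta}$-convergence for $\beta<\alpha$, this does not contradict $\|h_{K_k}-1\|_{C^{2,\alpha}}\geq\delta_1$, since the latter norm is strictly stronger; you would still need the linear Schauder bound with the $C^{\alpha}$-norm of $h_{K_k}^{p-1}f_k-1$ on the right-hand side, as in the paper. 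Your proposal to settle the constant-data case by phase-plane analysis of $h''+h=h^{p-1}$ is a nice self-contained alternative to the paper's appeal to known uniqueness, though one should still say a word about weak solutions whose support function touches zero.
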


There are relatively a few known results concerning the uniqueness of solutions for the case $0\le p<1$. This is because even in $n=2$ and $p=0$, the uniqueness does not hold for some positive, smooth function $f$ as shown in \cite{Yagisita06}. See also \cite{CLZ17} for non-uniqueness result when the given measure is discrete and $0<p<1$. 
Hence most of literature assume symmetry on $f$ \cite{CHLL20} or even strongly $f\equiv 1$ \cite{Gage93,Andrews99,BCD17}. For a non-symmetric $f$, the uniqueness of solutions to \eqref{eq:MP} is proved in \cite{CFL22} when $p=0$, $n=3$, and $f$ is close enough to a constant function as in \Cref{thm:MP}; and in \cite{BLYZ12} when $0<p<1$, $n=2$, and $f$ is origin symmetric.

It is worth noting that when $q>p+4$ and $p>0$, the solution to \eqref{eq:MP} is not unique even if $f\equiv1$, see \cite{CCL21}; when $p>q$, the solution to \eqref{eq:MP} is unique for any $f\in C^{\alpha}$, see \cite{HZ18_AM}. To the best of our knowledge, in the case $p<q\le p+4$ and $0<p<1$, the uniqueness for the equation \eqref{eq:MP} is not known even when $f\equiv1$.

Let us review some results related to the positivity of solutions for various $p$ with $q=2$. In \cite{CW06} the authors show that when $p\ge n$ or $-n<p\le -n+2$, solutions of \eqref{eq:MP} are positive under the assumption $0<c\le f\le C$ for some constants $c$ and $C$. On the other hand, the remaining case in the sub-critical range, $-n+2<p<n$, allows solutions that have zero at some points. See the examples in \cite[Example 1.6]{BT17} and \cite[Example 4.2]{BBC20} for $-n+2<p<1$ and in \cite[Example 4.1]{HLYZ05} for $1<p<n$. Especially in the planar case $n=2$, the solutions are not positive in general if $0<p<2$. Therefore, the condition that the function $f$ is close enough to a constant function in $C^\alpha$ is necessary for \Cref{thm:MP}. 

The paper is organized as follows. 
In \Cref{sec:surface-area} we estimate the total measure of $\widetilde C_{p,q}$ of convex bodies in terms of an interpolation between the area and length of their John ellipses. This estimate is one of the main ingredient in the proof of \Cref{thm:est}. 
We then prove our first main result, \Cref{thm:est}, in \Cref{sec:upper-bound}, and finally the proof of \Cref{thm:MP} is given in \Cref{sec:solvability}.


\section{\texorpdfstring{$L_p$}{Lp} dual curvature measure of convex bodies} \label{sec:surface-area}


In this section, we provide upper and lower bounds for the total measures of the $L_p$ dual curvature measures of convex bodies in terms of principal radii of their John ellipses. Recall from \cite{HLYZ16,LYZ18} that
\begin{equation}
\widetilde{C}_{p, q}(K, \mathbb{S}^{1}) = \int_{\mathbb{S}^{1}} \rho^{q-2}(\mathscr{A}^{\ast}(u)) h_K^{1-p}(u) \,\mathrm{d}S(K,u),
\end{equation}
where $\mathscr{A}^{\ast}(u) = \lbrace \xi \in \mathbb{S}^{1}: N(\rho(\xi)\xi) = u \rbrace$ is the reverse radial Gauss mapping and $N: \partial K \to \mathbb{S}^{1}$ is the Gauss map. For notational convenience, we write $S_K = S(K, \cdot)$ in the sequel.

\begin{lemma} \label{lem:Lp-ellipse}
Let $K \in \mathcal{K}^{1}_{o}$ and let $E$ be the John ellipse such that $E \subset K \subset 2E$ with principal radii $r_1$ and $r_2$. If $p \in [0, 1]$ and $q \geq 2$, then
\begin{equation} \label{eq:Cpq}
c_1(r_1r_2)^{1-p} \left( r_1^2+r_2^2 \right)^{\frac{p+q-2}{2}} \leq \widetilde{C}_{p, q}(K, \mathbb{S}^{1}) \leq c_2 (r_1r_2)^{1-p} \left( r_1^2+r_2^2 \right)^{\frac{p+q-2}{2}}
\end{equation}
for some $c_1, c_2 > 0$ which only depend on $p$ and $q$.
\end{lemma}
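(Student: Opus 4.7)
The plan is to rewrite $\widetilde C_{p,q}(K,\mathbb{S}^{1})$ as an integral over $\partial K$ by pushing $dS_K$ back through the Gauss map: since $\rho(\mathscr{A}^{\ast}(N(x)))=|x|$ and $h_K(N(x))=x\cdot N(x)$ for $x\in \partial K$,
\[
\widetilde C_{p,q}(K,\mathbb{S}^{1})=\int_{\partial K}|x|^{q-2}\,h_K(N(x))^{1-p}\,d\mathcal{H}^{1}(x).
\]
After rotating coordinates I may assume $E$ is axis-aligned with semi-axes $r_1\le r_2$; the target bounds will then follow from the sandwich $E\subset K\subset 2E$ together with the elementary equivalence $r_2^{p+q-2}\asymp(r_1^{2}+r_2^{2})^{(p+q-2)/2}$ (valid since $r_1\le r_2$ and $p+q-2\ge 0$).

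For the upper bound, $K\subset 2E$ yields $|x|\le Cr_2$ on $\partial K$, and since $q\ge 2$ this gives $|x|^{q-2}\le C^{q-2}r_2^{q-2}$. Pulling this factor out, the estimate reduces to bounding the $L_p$ surface area $S_p(K,\mathbb{S}^{1})=\int h_K^{1-p}\,dS_K$. Using $h_K\le 2h_E$ and H\"older's inequality with conjugate exponents $1/(1-p),\,1/p$,
\[
S_p(K,\mathbb{S}^{1})\le 2^{1-p}\Bigl(\int h_E\, dS_K\Bigr)^{1-p}\mathrm{Per}(K)^{p}.
\]
The mixed-volume identity $\int h_E\, dS_K=2V(K,E)$, combined with the monotonicity $V(K,E)\le V(2E,E)=2\,\mathrm{Area}(E)\asymp r_1r_2$ and $\mathrm{Per}(K)\le \mathrm{Per}(2E)\asymp r_2$, gives $S_p(K,\mathbb{S}^{1})\lesssim r_1^{1-p}r_2$, and hence $\widetilde C_{p,q}(K,\mathbb{S}^{1})\lesssim r_1^{1-p}r_2^{q-1}$, which is the claimed form.

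For the lower bound, $E\subset K$ gives $h_K\ge h_E$. In the centered case (origin at the center of $E$) one has $h_E(u)=\sqrt{r_1^{2}u_1^{2}+r_2^{2}u_2^{2}}\ge r_1$ pointwise, so $h_K\ge r_1$ and
\[
\widetilde C_{p,q}(K,\mathbb{S}^{1})\ge r_1^{1-p}\int_{\partial K}|x|^{q-2}\,d\mathcal{H}^{1}(x).
\]
A geometric argument then produces $\int_{\partial K}|x|^{q-2}\,d\mathcal{H}^{1}\gtrsim r_2^{q-1}$: since $K\supset E\ni(0,\pm r_2)$, convexity forces $\partial K$ to span a vertical distance at least $r_2$ both above and below $x_2=0$, so $\mathcal{H}^{1}(\partial K\cap\{|x_2|\ge r_2/2\})\gtrsim r_2$; on that set $|x|\ge r_2/2$, yielding the desired estimate. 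The hard part will be extending this to a general $K\in \mathcal{K}_o^{1}$ whose origin may lie off-center in $E$, where $h_K\ge r_1$ can fail; here I expect to split $\partial K$ according to the sign of $c\cdot N(x)$ (with $c$ the center of $E$) and exploit the vanishing moment $\int N\,d\mathcal{H}^{1}=0$ to absorb the shift $h_E(u)-c\cdot u\ge r_1$, reducing to the centered analysis.
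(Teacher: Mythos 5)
Your upper bound argument is essentially the paper's. You control $\rho \leq C\left(r_1^2+r_2^2\right)^{1/2}$ on $\partial K$ and then apply H\"older to $S_p(K,\mathbb{S}^1)$ to reduce to area and perimeter estimates for $2E$; the only cosmetic difference is that you route through the mixed volume $V(K,E)$ and its monotonicity rather than through $\mathrm{Area}(K)=\mathrm{Area}(2E)$ directly. That step is fine.

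Your lower bound is proved only in the centered case (origin at the center of $E$), where $h_K\geq h_E\geq r_1$ pointwise and the vertical-span argument gives $\int_{\partial K}|x|^{q-2}\,\mathrm{d}\mathcal{H}^1\gtrsim r_2^{q-1}$. That part is correct and is simpler than the paper's. The genuine gap is the off-center case, which is the actual content of the lemma: for $K\in\mathcal{K}^1_o$ the origin may sit anywhere in $K$, including on $\partial K$, so $h_K$ can vanish at a whole arc of directions and $h_K\geq r_1$ fails badly. Your proposed remedy --- split according to the sign of $c\cdot N(x)$ and invoke the vanishing moment $\int N\,\mathrm{d}\mathcal{H}^1=0$ --- is only a sketch and does not obviously close the gap. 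The vanishing moment is a linear identity, while the integrand $(c\cdot u + r_1)^{1-p}$ is nonlinear in $c\cdot u$ for $p\in(0,1)$, so there is no immediate cancellation to ``absorb the shift''. Restricting to the half-circle $\{c\cdot u\geq 0\}$ does give $h_K(u)\geq r_1$ there, but you then still need
\begin{equation*}
\int_{\{u\,:\,c\cdot u\geq 0\}}\rho\bigl(\mathscr{A}^\ast(u)\bigr)^{q-2}\,\mathrm{d}S_K(u)\;\gtrsim\; r_2^{q-1},
\end{equation*}
which requires exhibiting a macroscopic arc of $\partial K$ whose normals lie in that half-circle \emph{and} whose points satisfy $|x|\gtrsim r_2$ simultaneously. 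This is exactly what the paper's geometric construction is for: the tangent lines $L_\pm$ to $E$ through the corner $P=(a_1+2r_1,a_2+2r_2)$ of $2E$ isolate an arc $G_1$ on which $h_K\geq Cr_1r_2/(r_1^2+r_2^2)^{1/2}$ no matter where the origin is, and the sub-arc $G_2$ (cut by the rays through $R_\pm$) simultaneously has $\rho\geq\frac{3}{5}(r_1^2+r_2^2)^{1/2}$ and $S_K(G_2)\geq\frac{1}{5}(r_1^2+r_2^2)^{1/2}$. Until you produce an analogous arc in the off-center setting, the lower bound is incomplete.
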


\begin{proof}
Let us first prove the upper bound of $\widetilde{C}_{p, q}(K, \mathbb{S}^{1})$. Since
\begin{equation}
\rho(\mathscr{A}^{\ast}(u)) \leq 4\left( r_1^2 + r_2^2 \right)^{1/2}
\end{equation}
for all $u \in \mathbb{S}^{1}$, we obtain
\begin{equation} \label{eq:C-upper1}
\widetilde{C}_{p, q}(K, \mathbb{S}^{1}) \leq 4^{q-2} \left( r_1^2 + r_2^2 \right)^{\frac{q-2}{2}} S_p(K,\mathbb{S}^{1}).
\end{equation}
By using the H\"older inequality, we have
\begin{equation} \label{eq:C-upper2}
\begin{split}
S_p(K,\mathbb{S}^{1})
&\leq 2^{1-p}\mathrm{Area}(K)^{1-p} \mathcal{H}^{1}(\partial K)^p \\
&\leq 2^{1-p}\mathrm{Area}(2E)^{1-p} \mathcal{H}^{1}(\partial (2E))^p.
\end{split}
\end{equation}
In the second inequality we used that the area of a set and the length of the boundary of a convex body are monotone with respect to the set inclusion. Combining \eqref{eq:C-upper1}--\eqref{eq:C-upper2} and using
\begin{equation}
\mathrm{Area}(2E)= 4\pi r_1r_2 \quad\text{and}\quad \mathcal{H}^{1}(\partial (2E)) \leq 2\sqrt{2}\pi \left( r_1^2 + r_2^2 \right)^{1/2},
\end{equation}
we deduce the upper bound of $\widetilde{C}_{p, q}(K, \mathbb{S}^{1})$ in \eqref{eq:Cpq} with $c_2 = 2^{2q-1-3p/2}\pi$.

Let us next prove the lower bound of $\widetilde{C}_{p, q}(K, \mathbb{S}^{1})$. By rotating the coordinates if necessary, we may assume that $E$ is given by
\begin{equation}
E = \left\lbrace  (x_1, x_2) \in \mathbb{R}^2: \frac{(x_1-a_1)^2}{r_1^2}+\frac{(x_2-a_2)^2}{r_2^2} \leq 1 \right\rbrace
\end{equation}
with $a_1,a_2\ge0$. 
We consider two lines $L_+$ and $L_-$ that pass through the point $P = (a_1+2r_1, a_2+2r_2)$ and that are tangent to $\partial E$ at $Q_+$ and $Q_-$ (see \Cref{fig1}).
Then, we may write them as $x_2=l_+(x_1)$ and $x_2=l_-(x_1)$ with $l_-(a_1)<l_+(a_1)$, where $l_+$ and $l_-$ denote the linear functions whose graphs are $L_+$ and $L_-$, respectively.
We define a set
\begin{equation}
F_1 = \left\lbrace (x_1,x_2) \in \partial K: l_1(x_1) \leq x_2 \leq l_2(x_1)  \right\rbrace
\end{equation}
and its image $G_1 = N(F_1)$ under the Gauss map $N: \partial K \to \mathbb{S}^{1}$. 
We first want to obtain a lower bound of $h_K$ on $G_1$. 

\begin{figure}[h]
   \includegraphics[width=\linewidth]{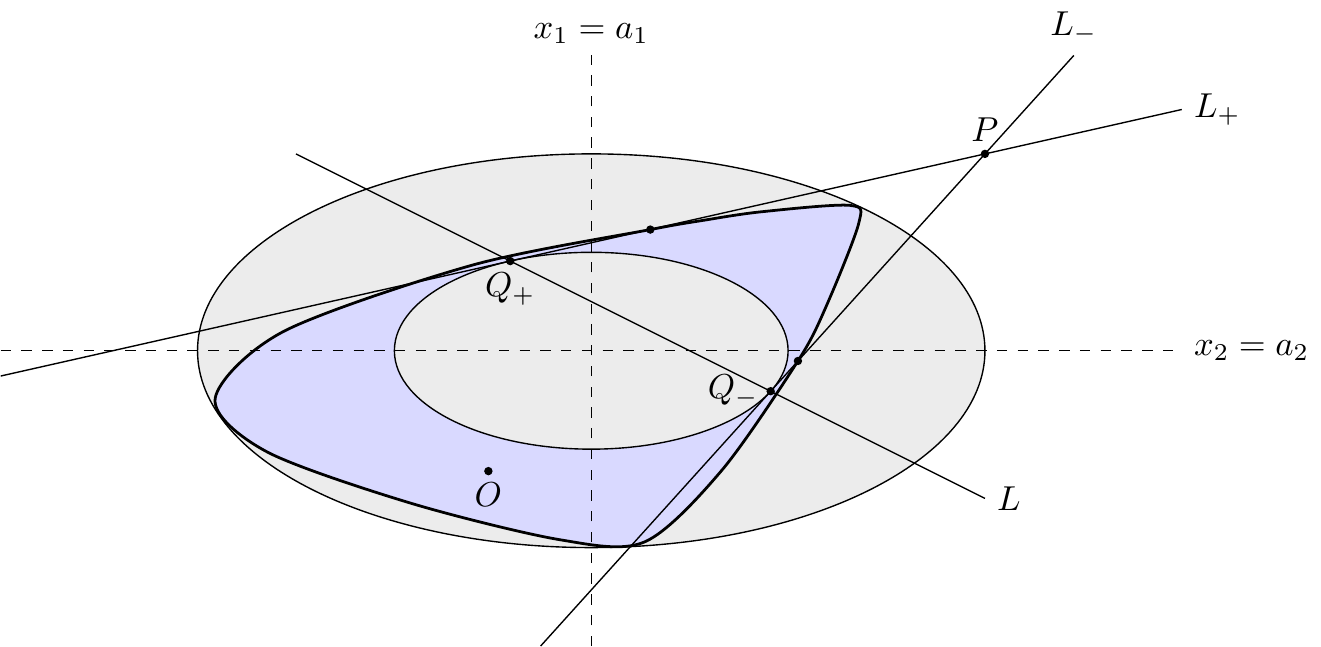} 
   \caption{}
   \label{fig1}
\end{figure}

To this end, we first observe that the lines $L_+$ and $L_-$ are given by
\begin{equation}
L_\pm: \frac{1\mp\sqrt{7}}{4r_1} (x_1-a_1) + \frac{1\pm\sqrt{7}}{4r_2} (x_2-a_2) = 1
\end{equation}
and that the contact points $Q_\pm$ between $\partial E$ and $L_\pm$ are written as 
\begin{align}
Q_\pm=(a_1,a_2)+\left(\tfrac{1\mp\sqrt{7}}{4}r_1,\tfrac{1\pm\sqrt{7}}{4}r_2\right).
\end{align}
Moreover, if we denote by $L$ the line passing through the points $Q_+$ and $Q_-$, then the equation for $L$ is simply
\begin{align}
\fr{x_1- a_1}{r_1}+\fr{x_2-a_2}{r_2}=\fr{1}{2}.
\end{align}

For $\nu \in G_1$, at least one of $\nu \cdot e_1 $ and $\nu \cdot e_2$ is nonnegative. 
If $\nu \cdot e_1 \geq 0$ and $\nu \cdot e_2 \geq 0$, then
\begin{equation}
h_K(\nu) \geq \mathrm{dist}((a_1, a_2), L) =\tfrac{1}{2}r_1r_2\left( r_1^2+r_2^2 \right)^{-1/2}.
\end{equation}
If $\nu \cdot e_1 \leq 0$ and $\nu \cdot e_2 \geq 0$, then we have
\begin{align}
h_K(\nu) &\geq \mathrm{dist}((a_1-2r_1, a_2), L_+)
\\
&=
\frac{|\frac{\sqrt{7}-1}{2}-1|}{\sqrt{\left( \frac{1-\sqrt{7}}{4r_1} \right)^2 + \left( \frac{1+\sqrt{7}}{4r_2} \right)^2}} \geq C r_1r_2 \left( r_1^2+r_2^2 \right)^{-1/2}
\end{align}
for some small $C > 0$. Since the same argument holds when $\nu \cdot e_1 \geq 0$ and $\nu \cdot e_2 \leq 0$, we have
\begin{equation} \label{eq:hK}
h_K(\nu) \geq C r_1r_2 \left( r_1^2+r_2^2 \right)^{-1/2}.
\end{equation}
in any cases. Therefore, it follows from \eqref{eq:hK} that
\begin{equation} \label{eq:Cpq-lower}
\begin{split}
\widetilde{C}_{p, q}(K, \mathbb{S}^{1})
&\geq \int_{G_1} h_K^{1-p} \rho^{q-2} \,\mathrm{d}S_K \\
&\geq C \left( r_1r_2 \left( r_1^2+r_2^2 \right)^{-1/2} \right)^{1-p} \int_{G_1} \rho^{q-2} \,\mathrm{d}S_K
\end{split}
\end{equation}
for some $C = C(p) > 0$.

It remains to obtain a lower bound of the integral on the right-hand side of \eqref{eq:Cpq-lower}. Let $P_+ = (a_1+\frac{6}{5}r_1, a_2+\frac{8}{5}r_2) \in \partial (2E)$ be one of the intersections of $\partial (2E)$ and the line passing through $P = (a_1+2r_1, a_2+2r_2)$ and $(a_1-2r_1, a_2)$. Similarly, let $P_- = (a_1+\frac{8}{5}r_1, a_2+\frac{6}{5}r_2) \in \partial (2E)$ be one of the intersections of $\partial (2E)$ and the line passing through $P$ and $(a_1, a_2-2r_2)$. Then, the lines passing through $P_\pm$ and $(a_1, a_2)$ are given by $x_2-a_2 = \frac{4r_2}{3r_1}(x_1-a_1)$ and $x_2-a_2 = \frac{3r_2}{4r_1}(x_1-a_1)$, respectively. See \Cref{fig2}.

\begin{figure}[h]
   \includegraphics[width=\linewidth]{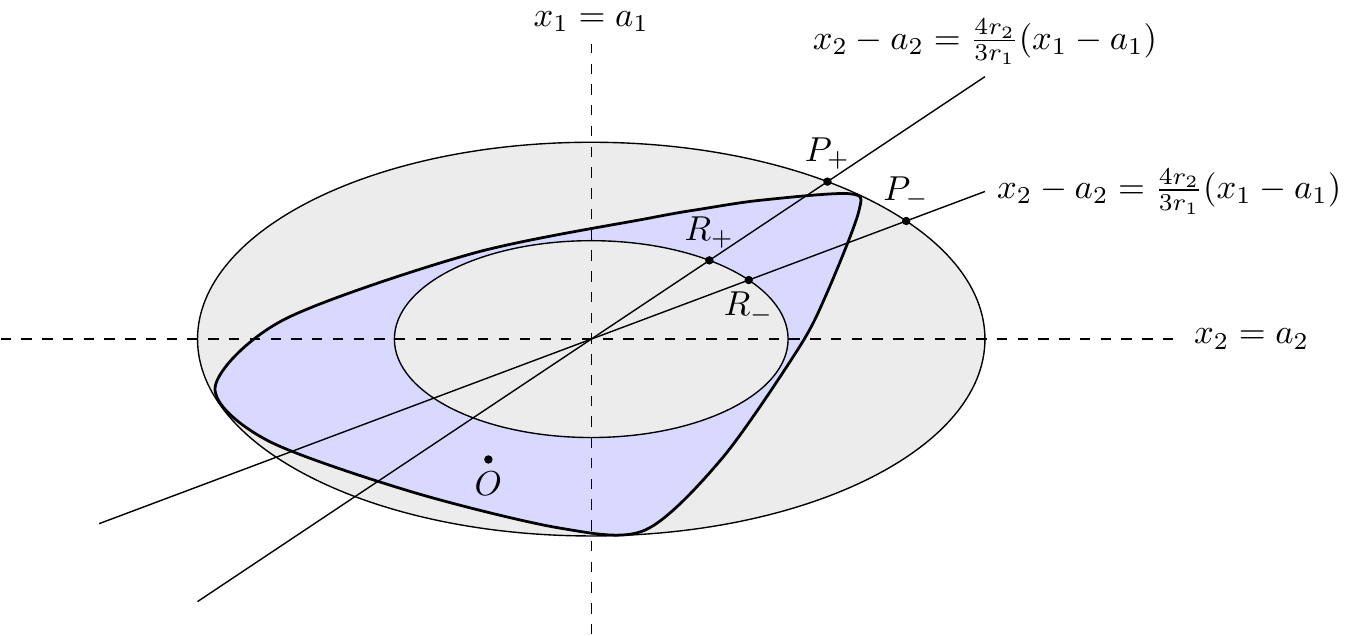} 
   \caption{}
   \label{fig2}
\end{figure}

We define a set
\begin{equation}
F_2 = \left\lbrace (x_1, x_2) \in \partial K: \tfrac{3r_2}{4r_1}(x_1-a_1) \leq x_2-a_2 \leq \tfrac{4r_2}{3r_1}(x_1-a_1) \right\rbrace
\end{equation}
and consider its image $G_2 = N(F_2)$ under the Gauss map $N$. Then, we have $F_2 \subset F_1$ and $G_2 \subset G_1$. Moreover, we obtain
\begin{equation} \label{eq:r-lower}
r \geq \min \lbrace \mathrm{dist}((a_1, a_2), R_+), \mathrm{dist}((a_1, a_2), R_-) \rbrace \geq \tfrac{3}{5} \left( r_1^2 + r_2^2 \right)^{1/2}
\end{equation}
on $G_2$, where $R_+ = (a_1+\frac{3}{5}r_1, a_2 + \frac{4}{5}r_2) \in \partial E \cap \lbrace x_2-a_2=\frac{4r_2}{3r_1}(x_1-a_1) \rbrace$ and $R_- = (a_1+\frac{4}{5}r_1, a_2 + \frac{3}{5}r_2) \in \partial E \cap \lbrace x_2-a_2 = \frac{3r_2}{4r_1} (x_1-a_1) \rbrace$. Furthermore, we have
\begin{equation} \label{eq:length-lower}
S_K(G_2) \geq \mathcal{H}^{1}(F_2) \geq \mathrm{dist}(R_+, R_-) = \tfrac{1}{5}\left( r_1^2+r_2^2 \right)^{1/2}.
\end{equation}
Therefore, \eqref{eq:Cpq-lower}, \eqref{eq:r-lower}, and \eqref{eq:length-lower} yield the lower bounds of $\widetilde{C}_{p, q}(K, \mathbb{S}^{1})$ in \eqref{eq:Cpq} for some $c_1 = c_1(p, q) > 0$.
\end{proof}


\section{Diameter estimate} \label{sec:upper-bound}


This section is devoted to the proof of \Cref{thm:est}. We basically follow the ideas given in \cite{CL18} and \cite{CFL22}, but our proof significantly uses \Cref{lem:Lp-ellipse}.

\begin{proof}[Proof of \Cref{thm:est}]
Let $E$ be the John ellipse such that
\begin{equation} \label{eq:ellipse}
E \subset K \subset 2E,
\end{equation}
where $E$ is centered at the center of mass of $K$. By rotating the coordinates if necessary, we may assume that $E$ is given by
\begin{equation}
E = \left\lbrace (x_1,x_2) \in \mathbb{R}^2: \frac{(x_1-a_1)^2}{r_1^2}+\frac{(x_2-a_2)^2}{r_2^2} \leq 1 \right\rbrace
\end{equation}
with $r_1 \geq r_2$. Once we prove that there exists a constant $C_0 > 0$ such that $r_2 \geq C_0 r_1$, the desired result follows. Indeed, if $r_2 \geq C_0 r_1$, then it follows from \Cref{lem:Lp-ellipse} that
\begin{equation}
c_1 C_0^{1-p} r_1^{q-p} \leq c_1(r_1r_2)^{1-p} \left( r_1^2+r_2^2 \right)^{\frac{p+q-2}{2}} \leq \widetilde{C}_{p, q}(K, \mathbb{S}^{1}) = \int_{\mathbb{S}^{1}} f \,\mathrm{d}\mathcal{H}^{1} \leq 2\pi \Lambda,
\end{equation}
which shows that
\begin{equation}
\|h_K\|_{L^{\infty}(\mathbb{S}^{1})} \leq \mathrm{diam}(K) \leq 4r_1 \leq 4\left( \frac{2\pi \Lambda }{c_1 C_0^{1-p}} \right)^{\frac{1}{q-p}}.
\end{equation}
Therefore, we will prove that $r_2 \geq C_0 r_1$ in the rest of the proof.

Assume to the contrary that, for arbitrarily large $M > 0$, there exists a convex body $K = K_M$ satisfying \eqref{eq:MP} for some $f_K$ with $1/\Lambda \leq f_K \leq \Lambda$ and having $r_1 > Mr_2$. Let us denote by $I = [a, b]$ the projection of $K$ onto the $x_1$-axis, and let $d= \mathrm{dist}(0, \partial I)$. There are two possibilities:

\begin{enumerate}[1.]
\item
There exists a constant $c_0 \in (0, 1/2)$ such that, for any $M > 0$, the convex body $K = K_M$ satisfies
\begin{equation}
d \geq c_0 r_1.
\end{equation}
\item
There exists a sequence $\lbrace M_i \rbrace_{i \in \mathbb{N}}$ of real numbers such that $M_i \to \infty$ as $i \to \infty$ and that the convex body $K_i = K_{M_i}$ satisfies
\begin{equation} \label{eq:case2}
d < \frac{r_1}{M_i}.
\end{equation}
\end{enumerate}

Let us first consider the case 1. We set
\begin{equation}
F = \lbrace (x_1,x_2) \in \partial K: \mathrm{dist}(x_1, \partial I) \geq \varepsilon_0 r_1 \rbrace,
\end{equation}
where $\varepsilon_0 = c_0/2$. Then, $F$ consists of two disjoint connected components $F = F_{+} \cup F_{-}$, where $F_{+}$ and $F_{-}$ are given by
\begin{equation}
F_{\pm} = \lbrace \gamma_{\pm}(s)=(s, \varphi_{\pm}(s)): s_0 \leq s \leq s_1 \rbrace
\end{equation}
for some concave function $\varphi_{+}$ and convex function $\varphi_{-}$. Here, $s_0 = a+ \varepsilon_0 r_1$ and $s_1= b-\varepsilon_0 r_1$. Let $N: \partial K \to \mathbb{S}^{1}$ be the Gauss map and define
\begin{equation}
G = N(F), \quad G_{+} = N(F_{+}), \quad\text{and}\quad G_{-} = N(F_{-}).
\end{equation}
On the one hand, we have
\begin{equation}
\int_{G} f_K \,\mathrm{d}\mathcal{H}^{1} \to 0
\end{equation}
as $M \to \infty$ since $G$ converges to $\lbrace e_2, -e_2 \rbrace$ as $M \to \infty$. On the other hand, we claim that there exists a constant $C > 0$ such that
\begin{equation} \label{eq:claim1}
\int_{G} \rho^{q-2} h_K^{1-p} \,\mathrm{d}S_K \geq C \int_{\mathbb{S}^{1}} \rho^{q-2} h_K^{1-p} \,\mathrm{d}S_K.
\end{equation}
Since
\begin{equation}
\int_{\mathbb{S}^{1}} \rho^{q-2} h_K^{1-p} \,\mathrm{d}S_K = \int_{\mathbb{S}^{1}} f_K \,\mathrm{d}\mathcal{H}^{1} \geq \fr{2\pi}{\Lambda},
\end{equation}
the claim \eqref{eq:claim1} will lead us to a contradiction as $M \to \infty$.

To prove \eqref{eq:claim1}, let us consider the quadrangle $\mathcal{S}$ with vertices $(s_0, \varphi_{\pm}(s_0))$ and $(s_1, \varphi_{\pm}(s_1))$. Let $\nu_{\pm}$ denote the outer unit normal vectors to $\mathcal{S}$ that are perpendicular to the lines $L_{\pm}$ joining $(s_0, \varphi_{\pm}(s_0))$ and $(s_1, \varphi_{\pm}(s_1))$, respectively. We may assume that $h_{\mathcal{S}}(\nu_{+}) \geq h_{\mathcal{S}}(\nu_{-})$. The square $\mathcal{S}$ may not contain the origin, but at least one of $h_{\mathcal{S}}(\nu_{+})$ and $h_{\mathcal{S}}(\nu_{-})$ is positive. Therefore, we have $h_{\mathcal{S}}(\nu_{+}) > 0$, see \Cref{fig3}.

\begin{figure}[h]
   \includegraphics[width=.7\linewidth]{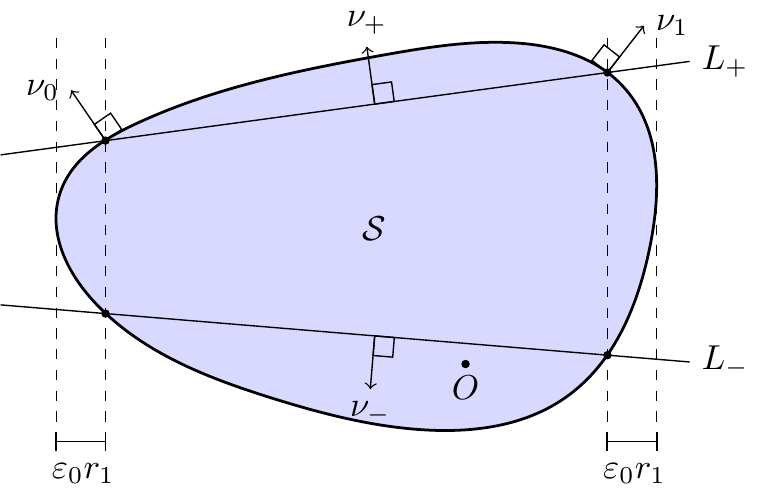}
   \caption{}
   \label{fig3}
\end{figure}

Since $K$ is convex, $G_+$ must be a connected arc of $\bS^1$. We denote by $\nu_0$ and $\nu_1$ the end points of the arc $G_+$ such that $\nu_0\cdot e_1\le\nu_1\cdot e_1$. Then it follows that $h_K(u) \geq h_{\mathcal{S}}(u) \geq \min \{h_{\mathcal{S}}(\nu_{+}), h_{\mathcal{S}}(\nu_0),h_{\mathcal{S}}(\nu_1)\}=:h_{\min}$ for all $u \in G_+$. Using this inequality, we obtain
\begin{equation} \label{eq:LHS1}
\int_{G} \rho^{q-2} h_K^{1-p} \,\mathrm{d}S_K \geq \int_{G_{+}} \rho^{q-2} h_K^{1-p} \,\mathrm{d}S_K \geq h^{1-p} \int_{G_+} \rho^{q-2} \,\mathrm{d}S_K.
\end{equation}
To estimate $h_{\min}$, we will show that
\begin{equation} \label{eq:LHS2}
h_{\min} \geq c_1 \min\lbrace \varphi_{+}(s_0)-\varphi_{-}(s_0), \varphi_{+}(s_1)-\varphi_{-}(s_1) \rbrace \geq c_2 r_2
\end{equation}
for some $c_1, c_2 > 0$, provided that $M>1$. For the second inequality in \eqref{eq:LHS2}, let $A$ be an intersection point of $K$ and $\lbrace x_1 = a \rbrace$. Let $\tilde L_{\pm}$ be the lines passing through $A$ and $(s_0, \varphi_{\pm}(s_0))$, and $A_{\pm}$ be the intersections of $\tilde L_{\pm}$ and $\lbrace x_1 = a_1 \rbrace$, respectively. Then we obtain
\begin{equation}
\varphi_{+}(s_0) - \varphi_{-}(s_0) = \varepsilon_0 r_1 \frac{|A_+A_-|}{a_1-a} \geq \varepsilon_0r_1 \frac{2r_2}{2r_1} = \varepsilon_0 r_2,
\end{equation}
and in a similar way, we also have $\varphi_+(s_1)-\varphi_-(s_1)\ge \varepsilon_0r_2$ which proves the second inequality in \eqref{eq:LHS2}.

\begin{figure}[h]
   \includegraphics[width=.65\linewidth]{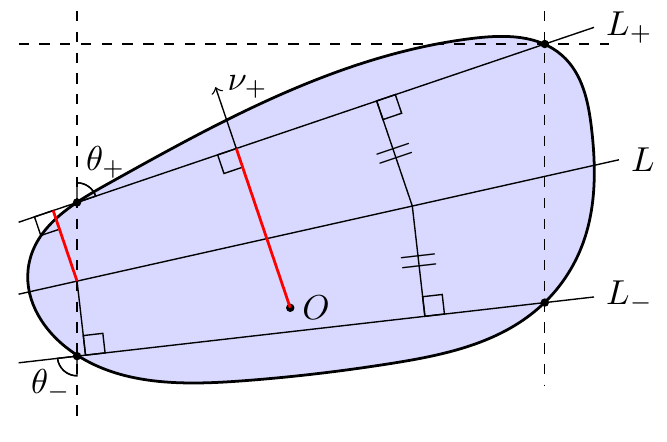} 
   \caption{}
   \label{fig4}
\end{figure}

For the first inequality in \eqref{eq:LHS2}, let us assume that $\varphi_{+}(s_0) - \varphi_{-}(s_0) \leq \varphi_{+}(s_1) - \varphi_{-}(s_1)$. Recall that $L_{\pm}$ are lines joining $(s_0, \varphi_{\pm}(s_0))$ and $(s_1, \varphi_{\pm}(s_1))$, respectively. If we consider a line
\begin{equation}
L = \lbrace (s, l(s)): s \in (s_0, s_1), \mathrm{dist}((s, l(s)), L_+) = \mathrm{dist}((s, l(s)), L_-) \rbrace,
\end{equation}
then the origin $O$ lies below the line $L$. 
Denoting the intersection point of $L$ and $x_1=s_0$ by $(s_0,m)$, it follows from the definition of $L$ that
\begin{equation}\label{ineq:hS}
h_{\mathcal{S}}(\nu_{+}) \geq \mathrm{dist}((s_0, m), L_+) = (\varphi_{+}(s_0) - m) \sin \theta_+=(m-\varphi_{-}(s_0) ) \sin \theta_-,
\end{equation}
where $\theta_\pm$ is the angle between $L_\pm$ and $x_2$-axis, see \Cref{fig4}. 

Since $\varphi_\pm(s_1)-\varphi_\pm(s_0) \leq 4r_2$ and $s_1-s_0 \geq 3r_1/2$, we have
\begin{equation}
\sin\theta_\pm \geq \tfrac{3}{2}r_1 \left[(\tfrac{3}{2}r_1)^2 + (4r_2)^2\right]^{-1/2}.
\end{equation}
Since $r_1 > Mr_2$, we have $\sin \theta_\pm > 2c_1$ for some $c_1$, provided that $M > 1$. 
Using this, we obtain
\begin{align}\label{ineq:dist}
\mathrm{dist}((s_0,m),L_+)
&= \tfrac{1}{2}(\varphi_+(s_0)-m)\sin\theta_+
+\tfrac{1}{2}(m-\varphi_-(s_0))\sin\theta_-
\\
&\ge c_1(\varphi_+(s_0)-\varphi_-(s_0))
\end{align}
which implies $h_{\mathcal{S}}(\nu_+)\ge c_1(\varphi_+(s_0)-\varphi_-(s_0))$.

On the other hand, if $\theta_0$ is the angle between $x_2$-axis and the tangent line at $(s_0,\varphi_+(s_0))$ to $\partial K$, then we have
\begin{align}\label{ineq:hSnu0}
h_{\mathcal{S}}(\nu_0)\ge (\varphi_+(s_0)-m)\sin \theta_0,
\end{align}
see \Cref{fig5}.
Using \eqref{ineq:dist}, we observe that $\varphi_+(s_0)-m\ge \mathrm{dist}((s_0,m),L_+)\ge c_1(\varphi_+(s_0)-\varphi_-(s_0))$. If we denote by $\phi$ the angle between $x_2$-axis and the line passing through the points $(s_0,\varphi_+(s_0))$ and a left tip of $\partial K$, then clearly $\sin \phi\le \sin \theta_0$. It then follows from $r_1>r_2$ and $\varphi_+(s_0)-\varphi_-(s_0)\le 4r_2$ that
\begin{align}
\sin\phi\ge \fr{\varepsilon_0 r_1 }{ \sqrt{[\varphi_+(s_0)-\varphi_-(s_0)]^2+(\varepsilon_0r_1)^2}}\ge \fr{\varepsilon_0}{\sqrt{\varepsilon_0^2+16}}.
\end{align}
Thus we conclude $h_{\mathcal{S}}(\nu_0)\ge c_1 (\varphi_+(s_0)-\varphi_-(s_0))$ for some smaller $c_1$. In a similar way, we obtain the same lower bound for $h_{\mathcal{S}}(\nu_1)$, and therefore \eqref{eq:LHS2} holds.

\begin{figure}[h]
   \includegraphics[width=.65\linewidth]{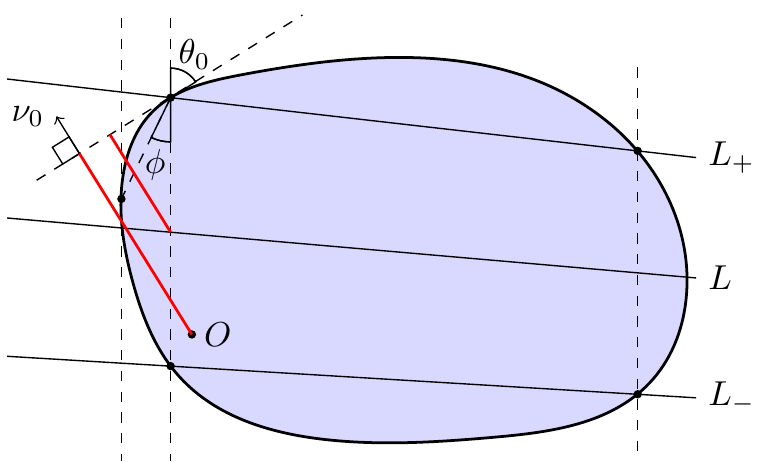} 
   \caption{}
   \label{fig5}
\end{figure}

It remains to estimate the integral on the right-hand side of \eqref{eq:LHS1}. To this end, we may assume that $|a| \leq |b|$ and define sets
\begin{equation}
F_{\ast} = \lbrace \gamma_+(s): s_{\ast} \leq s \leq s_1 \rbrace \subset F_+, \qu G_{\ast}=N(F_{\ast}),
\end{equation}
where $s_{\ast} = s_1-(s_1-s_0)/4$. In this set, we have a lower bound of $\rho$:
\begin{equation} \label{eq:LHS3}
\rho\geq \min\lbrace \mathrm{dist}(O, \gamma_+(s_{\ast})), \mathrm{dist}(O, \gamma_+(s_1)) \rbrace \geq (s_1-s_0)/4.
\end{equation}
Moreover, we know that $S_K(G_{\ast}) \geq (s_1-s_0)/4$. Note that it follows from \eqref{eq:ellipse} that $2r_1 \leq \mathcal{H}^{1}(I) = b-a \leq 4r_1$. Thus, we obtain
\begin{equation} \label{eq:LHS4}
s_1-s_0 = b-a-2\varepsilon_0 r_1 \geq (2-c_0)r_1 \geq \tfrac{3}{2} r_1.
\end{equation}
Thus, it follows from \eqref{eq:LHS1}, \eqref{eq:LHS2}, \eqref{eq:LHS3}, and \eqref{eq:LHS4} that
\begin{equation}
\int_{G} \rho^{q-2} h_K^{1-p} \,\mathrm{d}S_K \geq C r_1^{q-1} r_2^{1-p}.
\end{equation}
Hence, by using $r_1 \geq r_2$ and \Cref{lem:Lp-ellipse}, we obtain
\begin{equation}
\int_{G} \rho^{q-2} h_K^{1-p} \,\mathrm{d}S_K \geq C (r_1r_2)^{1-p} \left( r_1^2+r_2^2 \right)^{\frac{p+q-2}{2}} \geq C \int_{\mathbb{S}^{1}} \rho^{q-2} h_K^{1-p} \,\mathrm{d}S_K,
\end{equation}
which concludes \eqref{eq:claim1}.

Let us next consider the case 2. In this case, we may assume that $d = |a| < |b|$ so that $K_i \subset \lbrace (x_1,x_2) \in \mathbb{R}^2: x_1 \geq -d \rbrace$. We set
\begin{equation}
F_i = \left\lbrace (x_1,x_2) \in \partial K_i: x_1 \geq \frac{r_1}{M_i} \right\rbrace
\end{equation}
and $G_i = N(F_i)$. 
Then we can prove that
\begin{equation} \label{eq:omega-i}
G_i \subset \mathbb{S}_{0} := \lbrace u \in \mathbb{S}^{1}: u \cdot e_1 \geq -2/\sqrt{5} \rbrace
\end{equation}
for sufficiently large $i$. 
Indeed, if this is not the case, then one of the intersection points of $\partial K_i$ and the line $\{x_1=r_1/M_i\}$ has a unit normal vector that is not in $\mathbb{S}_0$ for infinitely many $i$. 
Let $P_{i}$ be the intersection point, with the larger $x_2$-coordinates, of $\partial K_i$ and the line $\lbrace x_1 = r_1/M_i \rbrace$, and let $v_i= (v_{i, 1}, v_{i, 2}) \in \mathbb{S}^{1}$ be the unit normal vector of $\partial K_i$ at $P_i$. We may assume that $v_i\not \in \mathbb{S}_0$ for infinitely many $i$. Then for such $i$, it can be verified that $v_{i,1}+2v_{i,2}<0$. On the other hand, since the convex body $K_i$ is contained in the half space
\begin{equation}
H_i :=\left\lbrace (x_1,x_2) \in \mathbb{R}^2: \left( x_1- \frac{r_1}{M_i}, x_2 - 2r_2 \right) \cdot v_i \leq 0 \right\rbrace
\end{equation}
and the origin $O$ lies in $K$, it follows from $r_1>M_ir_2$ and $v_{i,1}<0$ that 
\begin{equation}
0\ge-\frac{r_1}{M_i} v_{i, 1} - 2r_2 v_{i, 2} > -(v_{i, 1} + 2v_{i, 2}) r_2. 
\end{equation}
However, the last term in this inequality is positive, which is a contradiction. Therefore, \eqref{eq:omega-i} is true for sufficiently large $i$, and hence,
\begin{equation} \label{eq:lower}
\frac{1}{\Lambda} |\mathbb{S}^{1} \setminus \mathbb{S}_{0}| \leq \int_{\mathbb{S}^{1} \setminus \mathbb{S}_{0}} f_{K_i} \,\mathrm{d}\mathcal{H}^{1} \leq \int_{\mathbb{S}^{1} \setminus G_i} f_{K_i} \,\mathrm{d}\mathcal{H}^{1} = \widetilde{C}_{p, q}(K_i, \mathbb{S}^{1} \setminus G_i).
\end{equation}
We claim that
\begin{equation} \label{eq:claim2}
\widetilde{C}_{p, q}(K_i, \mathbb{S}^{1} \setminus G_i) \leq \frac{C}{M_i^{1-p}} (r_1r_2)^{1-p} \left( r_1^2+r_2^2 \right)^{\frac{p+q-2}{2}}.
\end{equation}
Once we prove \eqref{eq:claim2}, we will have from \eqref{eq:lower}, \eqref{eq:claim2}, and \Cref{lem:Lp-ellipse} that
\begin{equation}
CM_i^{1-p} \leq \widetilde{C}_{p, q}(K_i, \mathbb{S}^{1}) = \int_{\mathbb{S}^{1}} f_{K_i} \,\mathrm{d}\mathcal{H}^{1} \leq 2\Lambda \pi,
\end{equation}
which yields a contradiction by sending $i \to \infty$.

Let us now prove the claim \eqref{eq:claim2}. First of all, we have $r \leq 4(r_1^2+r_2^2)^{1/2}$ on $\mathbb{S}^{1}$, and hence,
\begin{equation}
\widetilde{C}_{p, q}(K_i, \mathbb{S}^{1} \setminus G_i) \leq 4^{q-2}\left( r_1^2+r_2^2 \right)^{\frac{q-2}{2}} \int_{\mathbb{S}^{1} \setminus G_i} h_{K_i}^{1-p} \,\mathrm{d}S_{K_i}.
\end{equation}
We define
\begin{equation}
\tilde{K}_i = \left\lbrace x \in K_i: x \cdot e_1 \leq \frac{r_1}{M_i} \right\rbrace
\end{equation}
and
\begin{equation}
Q_i = \left\lbrace x \in \mathbb{R}^{2}: -d \leq x_1 \leq \frac{r_1}{M_i}, -2r_2 \leq x_2-a_2 \leq 2r_2 \right\rbrace.
\end{equation}
Since $\tilde{K}_i \subset Q_i$, we have $\mathrm{Area}(\tilde{K}_i) \leq \mathrm{Area}(Q_i)$. Moreover, we also have $\mathcal{H}^{1}(\partial \tilde{K}_i) \leq \mathcal{H}^{1}(\partial Q_i)$ since $\tilde{K}_i$ is convex. By the H\"older inequality, we obtain
\begin{equation}
\begin{split}
\int_{\mathbb{S}^{1} \setminus G_i} h_{K_i}^{1-p} \,\mathrm{d}S_{K_i}
&\leq \left( \int_{\mathbb{S}^{1}\setminus G_i} h_{K_i} \,\mathrm{d}S_{K_i} \right)^{1-p} \left( \int_{\mathbb{S}^{1}\setminus G_i} \,\mathrm{d}S_{K_i} \right)^p \\
&\leq 2^{1-p}\mathrm{Area}(\tilde{K}_i)^{1-p} \mathcal{H}^{1}(\partial \tilde{K}_i)^p \\
&\leq 2^{1-p}\mathrm{Area}(Q_i)^{1-p} \mathcal{H}^{1}(\partial Q_i)^p \\
&= 2^{1-p}\left( 4r_2 \left( \frac{r_1}{M_i}+d \right) \right)^{1-p} \left( \frac{2r_1}{M_i}+2d+ 8r_2 \right)^p.
\end{split}
\end{equation}
Thus, the claim \eqref{eq:claim2} follows from \eqref{eq:case2} and $(r_1+r_2)^p \leq 2^{p/2} (r_1^2+r_2^2)^{p/2}$, provided that $M_i > 1$.
\end{proof}


\section{Uniqueness and positivity of solution} \label{sec:solvability}


In this section, we prove the uniqueness and positivity of a solution to the $L_p$ Minkowski problem.

\begin{lemma} \label{lem:uniqueness}
Let $p \in (0, 1)$, $q=2$, and $f \in C^{\alpha}(\mathbb{S}^{1})$. Then, there exists a constant $\varepsilon_0 \in (0, 1/2)$ such that if $\|f-1\|_{C^{\alpha}(\mathbb{S}^{1})} \leq \varepsilon_0$ and if $K_j$, $j=1, 2$, are solutions to \eqref{eq:MP} with $\|h_{K_j}-1\|_{L^{\infty}(\mathbb{S}^{1})} \leq \varepsilon_0$, then $K_1=K_2$.
\end{lemma}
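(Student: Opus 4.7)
The plan is to linearize the equation around $h\equiv 1$ (the unit disk) and close the uniqueness via a spectral gap. Specializing \eqref{eq:MP} to $n=2$, $q=2$ makes the exponent $(q-n)/2$ vanish, so the equation reduces to
\begin{equation*}
h^{1-p}(h'' + h) = f \quad \text{on } \bS^1,
\end{equation*}
where $'$ denotes differentiation in the angular variable. Throughout, I use that solutions are classical (indeed $C^{2,\alpha}$), so all operations below are justified.

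Let $h_j = h_{K_j}$ and $v = h_1 - h_2$. Interpolating along $h_s = h_2 + sv$ for $s\in[0,1]$ and writing $L(h) = h^{1-p}(h''+h)$, the fundamental theorem of calculus turns $0 = L(h_1) - L(h_2)$ into a linear ODE for $v$:
\begin{equation*}
A(\theta)\,v'' + B(\theta)\,v = 0, \qquad A = \int_0^1 h_s^{1-p}\,\d s, \qquad B = A + (1-p)\int_0^1 h_s^{-p}(h_s''+h_s)\,\d s.
\end{equation*}
The assumption $\|h_j - 1\|_\infty \leq \varepsilon_0 < 1/2$ keeps $h_s \in (1/2, 3/2)$, so in particular $A > 0$. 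Moreover, the equation itself gives pointwise $h_j'' = f h_j^{p-1} - h_j$; expanding about $h = f = 1$ shows $\|h_j''\|_\infty \leq C_p \varepsilon_0$, so $\|h_s''\|_\infty \leq C_p \varepsilon_0$ as well. Substituting $h_s = 1 + O(\varepsilon_0)$ and $h_s'' = O(\varepsilon_0)$ into the definitions yields $A = 1 + O(\varepsilon_0)$ and $B = (2-p) + O(\varepsilon_0)$, hence $a := B/A$ satisfies $\|a - (2-p)\|_{L^\infty(\bS^1)} \leq C_p\varepsilon_0$.

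It remains to show that $v'' + a\,v = 0$ on $\bS^1$ forces $v \equiv 0$ when $a$ is sufficiently close to the constant $2-p$. The eigenvalues of $-\partial_\theta^2$ on $\bS^1$ are $\{k^2 : k\in\mathbb{Z}_{\geq 0}\}$; since $p \in (0,1)$, the distance from $2-p$ to this spectrum equals
\begin{equation*}
\min_{k\in\mathbb{Z}_{\geq 0}} |k^2 - (2-p)| = 1 - p > 0,
\end{equation*}
attained at $k=1$. Parseval's identity thus gives $\|v''+(2-p)v\|_{L^2(\bS^1)} \geq (1-p)\|v\|_{L^2(\bS^1)}$. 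Rewriting the ODE as $v''+(2-p)v = -(a-(2-p))v$, we obtain
\begin{equation*}
(1-p)\|v\|_{L^2} \leq \|(a-(2-p))\,v\|_{L^2} \leq C_p\varepsilon_0\,\|v\|_{L^2}.
\end{equation*}
Choosing $\varepsilon_0 < (1-p)/C_p$ forces $v \equiv 0$ and hence $K_1 = K_2$.

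The conceptual crux is the spectral gap $1-p$, which is positive precisely because $p \in (0,1)$ places $2-p$ strictly between the two nearest eigenvalues $1$ and $4$; this is exactly the threshold below which the argument could fail. The only mildly delicate technical point is bounding $\|h_j''\|_\infty$ by $O(\varepsilon_0)$, but the equation does this for free once $\|h_j - 1\|_\infty$ and $\|f-1\|_\infty$ are small, so no separate regularity theory is needed beyond that $h_j \in C^2(\bS^1)$.
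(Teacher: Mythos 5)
Your proof is correct and takes a genuinely different route from the paper's. The paper argues by contradiction: it posits sequences of distinct solutions $K_i \neq L_i$ with $h_{K_i}, h_{L_i} \to 1$, upgrades $L^\infty$ convergence to $C^{2,\alpha}$ convergence via Caffarelli's $W^{2,p}$-regularity theorem and Schauder estimates, identifies the linearized operator at $h\equiv 1$ as $\Delta_{\bS^1} + (2-p)$, and concludes via the inverse function theorem using the same non-resonance fact you use (that $2-p$ is not an eigenvalue of $-\Delta_{\bS^1}$). Your argument is direct and quantitative: rather than extracting limits and appealing to the IFT, you interpolate along the segment $h_s = h_2 + s(h_1-h_2)$ to produce the variable-coefficient ODE $v'' + a\,v = 0$ with $\|a-(2-p)\|_{L^\infty}=O(\varepsilon_0)$, and close by the Parseval spectral gap $\min_{k\geq 0}|k^2-(2-p)|=1-p$. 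What your version buys is elementariness and an in-principle explicit $\varepsilon_0$: no compactness, no IFT, and no appeal to Caffarelli's multidimensional regularity theory (in $n=2$ the identity $h'' = f h^{p-1}-h$ bootstraps a generalized solution with $h$ bounded below to $C^{2,\alpha}$ in one line). What the paper's framework buys is that it generalizes verbatim to any dimension, whereas your Fourier computation is tailored to $\bS^1$. One thing you should make explicit rather than merely assert: the hypothesis only gives Aleksandrov solutions a priori, so the sentence claiming classical regularity should be replaced by the one-line bootstrap argument from $h'' + h = f h^{p-1}$ as a measure identity (which you already sketch at the end). With that filled in, the proof is complete.
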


\begin{proof}
Suppose to the contrary that there is no such a constant $\varepsilon_0$. Then, there exist sequences $\{K_i\}_{i\in \mathbb{N}}$ and $\{L_i\}_{i\in \mathbb{N}}$ of convex bodies such that $K_i\not = L_i$, 
\begin{align}
\|h_{K_i}-1\|_{L^{\infty}(\mathbb{S}^{1})} \le \fr{1}{i+1}, \qu \|h_{L_i}-1\|_{L^{\infty}(\mathbb{S}^{1})} \le \fr{1}{i+1}, 
\end{align}
and both $K_i$ and $L_i$ are solutions to \eqref{eq:MP} with $f_i$ satisfying $\|f_i-1\|_{C^\alpha(\mathbb{S}^{1})} \le \fr{1}{i+1}$. By the result of Caffarelli \cite{Caf90}, we have for large $i$ that $h_{K_i}\in W^{2,q}$ for all $q>0$ and hence $h_{K_i}\in C^{1,\alpha'}$ for all $0<\alpha'<1$.
Moreover, we have $\|h_{K_i}\|_{C^{1,\alpha'}(\mathbb{S}^{1})} \le C$ for some constant $C>0$ independent of $i$. By compactness, we find a subsequence (still denoted by $\{ K_i\}$) such that $h_{K_i} \rightarrow 1$ in $C^{1,\alpha''}(\mathbb{S}^{1})$ for all $0<\alpha''<\alpha'$ as $i\rightarrow \infty$, and in particular $h_{K_i}\rightarrow 1$ in $C^{0,1}(\mathbb{S}^{1})$. 
Furthermore, $h_{K_i}^{p-1} \rightarrow 1$ in $C^{0,1}(\mathbb{S}^{1})$ since $h_{K_i}\ge 1/2$ and so for any $x,y \in \bS^1$
\begin{align}
\fr{1}{1-p}\left| h_{K_i}^{p-1}(x) - h_{K_i}^{p-1}(y) \right|
\le \left|\int_{h_{K_i}(y)}^{h_{K_i}(x)} t^{p-2} \,\mathrm{d}t \right|
\le 2^{2-p}|h_{K_i}(x)-h_{K_i}(y)|.
\end{align}
Thus, it follows from $f_i\rightarrow 1$ in $C^{\alpha}(\mathbb{S}^{1})$ that
\begin{align}
h_{K_i}^{p-1}f_i\rightarrow 1 \qu \text{in } C^\alpha(\mathbb{S}^{1}) \qu \text{as }i \rightarrow \infty.
\end{align}

Note that $h_{K_i}-1$ satisfies a uniformly elliptic linear equation whose coefficients are independent in $i$. Indeed, it satisfies
\begin{align}
\Delta_{\mathbb{S}^{1}} (h_{K_i}-1) + (h_{K_i}-1) &=h_{K_i}^{p-1}f_i-1 \quad \text{on } \mathbb{S}^{1}.
\end{align}
Applying the Schauder estimate to $h_{K_i}-1$, we have
\begin{align}
\|h_{K_i}-1\|_{C^{2,\alpha}(\mathbb{S}^{1})} \le C\left( \|h_{K_i}-1\|_{L^{\infty}(\mathbb{S}^1)} + \left\| h_{K_i}^{p-1}f_i-1 \right\|_{C^\alpha(\mathbb{S}^{1})} \right),
\end{align}
where the right-hand side goes to zero as $i\rightarrow \infty$.
By the same argument, we also have $h_{L_i}\rightarrow 1$ in $C^{2,\alpha}(\mathbb{S}^{1})$.

Recall that $h\equiv 1$ is the unique solution to \eqref{eq:MP} with $f\equiv 1$. Observe that the linearized operator $L$ of \eqref{eq:MP} at $h\equiv 1$ is given by
\begin{equation}
Lv = \Delta_{\mathbb{S}^1} v + v+(1-p)v,
\end{equation}
but $2-p$ is not an eigenvalue of the Laplace--Beltrami operator $\Delta_{\mathbb{S}^1}$, which implies that the linearized operator $L$ is invertible. Hence, by the inverse function theorem, we conclude that $K_i=L_i$ for large $i$, which is a contradiction.
\end{proof}

\begin{proof} [Proof of \Cref{thm:MP}]
Let us prove the first assertion of the theorem. Assume to the contrary that for any $i \in \mathbb{N}$ there exists $f_i \in C^{\alpha}(\mathbb{S}^{1})$ with $\|f_i-1\|_{C^{\alpha}(\mathbb{S}^{1})} < 1/i$ such that there are two different solutions $K_i$ and $L_i$. By \Cref{lem:uniqueness}, at least one of them, say $K_i$, satisfies $\|h_{K_i}-1\|_{L^{\infty}(\mathbb{S}^{1})} > \varepsilon_0$. Moreover, by \Cref{thm:est} we have
\begin{equation}
\|h_{K_i}\|_{L^{\infty}(\mathbb{S}^{1})} \leq C.
\end{equation}
Therefore, the Blaschke's selection theorem shows that there exists a convex set $K_{\infty}$ such that $K_i \to K_{\infty}$ in Hausdorff distance as $i \to \infty$ up to subsequences. However, we know that $h_{K_{\infty}}$ satisfies \eqref{eq:MP} with $f=1$, which implies that $h_{K_{\infty}} \equiv 1$ by the uniqueness. This contradicts to the fact that $\|h_{K_{\infty}}-1\|_{L^{\infty}(\mathbb{S}^{1})} \geq \varepsilon_0$.

The second assertion of the theorem can be proved in a similar way. If it is not true, then for any $i \in \mathbb{N}$ we can find $f_i \in C^{\alpha}(\mathbb{S}^{1})$ with $\|f_i-1\|_{C^{\alpha}(\mathbb{S}^{1})} < 1/i$ such that the solution $K_i$ satisfies $\|h_{K_i}-1\|_{L^{\infty}(\mathbb{S}^{1})} \geq 1$, which yields a contradiction.
\end{proof}


\section*{Acknowledgement}


We want to thank Kyeongsu Choi for his interest in our work and valuable comments. Minhyun Kim gratefully acknowledges financial support by the German Research Foundation (GRK 2235 - 282638148). Taehun Lee has been supported by a KIAS Individual Grant (MG079501) at Korea Institute for Advanced Study.


\begin{thebibliography}{10}

\bibitem{Andrews99}
B.~Andrews.
\newblock Gauss curvature flow: the fate of the rolling stones.
\newblock {\em Invent. Math.}, 138(1):151--161, 1999.

\bibitem{BBC20}
G.~Bianchi, K.~J. B\"{o}r\"{o}czky, and A.~Colesanti.
\newblock Smoothness in the {$L_p$} {M}inkowski problem for {$p<1$}.
\newblock {\em J. Geom. Anal.}, 30(1):680--705, 2020.

\bibitem{BF19}
K.~J. B\"{o}r\"{o}czky and F.~Fodor.
\newblock The {$L_p$} dual {M}inkowski problem for {$p>1$} and {$q>0$}.
\newblock {\em J. Differential Equations}, 266(12):7980--8033, 2019.

\bibitem{BLYZ12}
K.~J. B\"{o}r\"{o}czky, E.~Lutwak, D.~Yang, and G.~Zhang.
\newblock The log-{B}runn-{M}inkowski inequality.
\newblock {\em Adv. Math.}, 231(3-4):1974--1997, 2012.

\bibitem{BT17}
K.~J. B\"{o}r\"{o}czky and H.~T. Trinh.
\newblock The planar {$L_p$}-{M}inkowski problem for {$0<p<1$}.
\newblock {\em Adv. in Appl. Math.}, 87:58--81, 2017.

\bibitem{BCD17}
S.~Brendle, K.~Choi, and P.~Daskalopoulos.
\newblock Asymptotic behavior of flows by powers of the {G}aussian curvature.
\newblock {\em Acta Math.}, 219(1):1--16, 2017.

\bibitem{Caf90}
L.~A. Caffarelli.
\newblock Interior {$W^{2,p}$} estimates for solutions of the
  {M}onge-{A}mp\`ere equation.
\newblock {\em Ann. of Math. (2)}, 131(1):135--150, 1990.

\bibitem{CHZ19}
C.~Chen, Y.~Huang, and Y.~Zhao.
\newblock Smooth solutions to the {$L_p$} dual {M}inkowski problem.
\newblock {\em Math. Ann.}, 373(3-4):953--976, 2019.

\bibitem{CCL21}
H.~Chen, S.~Chen, and Q.-R. Li.
\newblock Variations of a class of {M}onge-{A}mp\`ere-type functionals and
  their applications.
\newblock {\em Anal. PDE}, 14(3):689--716, 2021.

\bibitem{CL21}
H.~Chen and Q.-R. Li.
\newblock The {$L_ p$} dual {M}inkowski problem and related parabolic flows.
\newblock {\em J. Funct. Anal.}, 281(8):Paper No. 109139, 65, 2021.

\bibitem{CFL22}
S.~Chen, Y.~Feng, and W.~Liu.
\newblock Uniqueness of solutions to the logarithmic {M}inkowski problem in
  $\mathbb{R}^3$.
\newblock {\em arXiv preprint arXiv:2202.10074}, 2022.

\bibitem{CHLL20}
S.~Chen, Y.~Huang, Q.-R. Li, and J.~Liu.
\newblock The {$L_ p$}-{B}runn-{M}inkowski inequality for {$p<1$}.
\newblock {\em Adv. Math.}, 368:107166, 21, 2020.

\bibitem{CL18}
S.~Chen and Q.-R. Li.
\newblock On the planar dual {M}inkowski problem.
\newblock {\em Adv. Math.}, 333:87--117, 2018.

\bibitem{CLZ17}
S.~Chen, Q.-R. Li, and G.~Zhu.
\newblock On the {$L_p$} {M}onge-{A}mp\`ere equation.
\newblock {\em J. Differential Equations}, 263(8):4997--5011, 2017.

\bibitem{CW06}
K.-S. Chou and X.-J. Wang.
\newblock The {$L_p$}-{M}inkowski problem and the {M}inkowski problem in
  centroaffine geometry.
\newblock {\em Adv. Math.}, 205(1):33--83, 2006.

\bibitem{Du21}
S.-Z. Du.
\newblock On the planar {$L_ p$}-{M}inkowski problem.
\newblock {\em J. Differential Equations}, 287:37--77, 2021.

\bibitem{Fir74}
W.~J. Firey.
\newblock Shapes of worn stones.
\newblock {\em Mathematika}, 21:1--11, 1974.

\bibitem{Gage93}
M.~E. Gage.
\newblock Evolving plane curves by curvature in relative geometries.
\newblock {\em Duke Math. J.}, 72(2):441--466, 1993.

\bibitem{GLW22}
Q.~Guang, Q.-R. Li, and X.-J. Wang.
\newblock The $L_p$-{M}inkowski problem with super-critical exponents.
\newblock {\em arXiv preprint arXiv:2203.05099}, 2022.

\bibitem{HLYZ16}
Y.~Huang, E.~Lutwak, D.~Yang, and G.~Zhang.
\newblock Geometric measures in the dual {B}runn-{M}inkowski theory and their
  associated {M}inkowski problems.
\newblock {\em Acta Math.}, 216(2):325--388, 2016.

\bibitem{HZ18_AM}
Y.~Huang and Y.~Zhao.
\newblock On the {$L_p$} dual {M}inkowski problem.
\newblock {\em Adv. Math.}, 332:57--84, 2018.

\bibitem{HLYZ05}
D.~Hug, E.~Lutwak, D.~Yang, and G.~Zhang.
\newblock On the {$L_p$} {M}inkowski problem for polytopes.
\newblock {\em Discrete Comput. Geom.}, 33(4):699--715, 2005.

\bibitem{JLW15}
H.~Jian, J.~Lu, and X.-J. Wang.
\newblock Nonuniqueness of solutions to the {$L_p$}-{M}inkowski problem.
\newblock {\em Adv. Math.}, 281:845--856, 2015.

\bibitem{KMS09}
M.~A. Khuri, F.~C. Marques, and R.~M. Schoen.
\newblock A compactness theorem for the {Y}amabe problem.
\newblock {\em J. Differential Geom.}, 81(1):143--196, 2009.

\bibitem{LLL22}
Q.-R. Li, J.~Liu, and J.~Lu.
\newblock Nonuniqueness of {S}olutions to the {$L_p$} {D}ual {M}inkowski
  {P}roblem.
\newblock {\em Int. Math. Res. Not. IMRN}, (12):9114--9150, 2022.

\bibitem{Lut93}
E.~Lutwak.
\newblock The {B}runn-{M}inkowski-{F}irey theory. {I}. {M}ixed volumes and the
  {M}inkowski problem.
\newblock {\em J. Differential Geom.}, 38(1):131--150, 1993.

\bibitem{LYZ18}
E.~Lutwak, D.~Yang, and G.~Zhang.
\newblock {$L_p$} dual curvature measures.
\newblock {\em Adv. Math.}, 329:85--132, 2018.

\bibitem{Min03}
H.~Minkowski.
\newblock Volumen und {O}berfl\"{a}che.
\newblock {\em Math. Ann.}, 57(4):447--495, 1903.

\bibitem{Yagisita06}
H.~Yagisita.
\newblock Non-uniqueness of self-similar shrinking curves for an anisotropic
  curvature flow.
\newblock {\em Calc. Var. Partial Differential Equations}, 26(1):49--55, 2006.

\end{thebibliography}

\end{document}